\documentclass[leqno,11pt]{amsart} 
\setlength{\textheight}{23cm}
\setlength{\textwidth}{16cm}
\setlength{\oddsidemargin}{0cm}
\setlength{\evensidemargin}{0cm}
\setlength{\topmargin}{0cm}
\usepackage{amssymb}
\usepackage{bm}
\usepackage[pdftex]{graphicx}
%
%
%
\theoremstyle{plain} 
\newtheorem{theorem}{\indent\sc Theorem}[section]

\newtheorem{corollary}[theorem]{\indent\sc Corollary}

\newtheorem{fact}[theorem]{\indent\sc Fact}
\theoremstyle{definition} 

\newtheorem{remark}[theorem]{\indent\sc Remark}
\newtheorem{example}[theorem]{\indent\sc Example}

%

%

\def\C{{\mathbb{C}}}
\def\R{{\mathbb{R}}}
\def\D{{\mathbb{D}}}
\def\L{{\mathbb{L}}}
\def\Pi{{\mathbf{P}}}
\def\E{{\mathbb{E}}}

\begin{document}

\title[Spacelike CMC surfaces in Lorentz-Minkowski space]{A note on isothermic coordinate systems for \\ spacelike surfaces with constant mean curvature \\ in Lorentz-Minkowski space} 

\author[Y.~Kawakami]{Yu Kawakami} 

\author[K.~Satake]{Kaito Satake} 

\dedicatory{Dedicated to Professor Wayne Rossman on his sixtieth birthday}


\renewcommand{\thefootnote}{\fnsymbol{footnote}}
\footnote[0]{2020\textit{ Mathematics Subject Classification}.
Primary 53A10; Secondary 53C24, 53C42.}
\keywords{
surface with constant mean curvature (CMC), Lorentz-Minkowski space, isothermic coordinate system, hyperbolic cylinder}
\thanks{
This work was supported by JSPS KAKENHI Grant Number JP23K03086. 
}

\address{
Faculty of Mathematics and Physics, \endgraf
Kanazawa University \endgraf
Kanazawa, 920-1192, \endgraf
Japan
}
\email{y-kwkami@se.kanazawa-u.ac.jp}

\address{
Nagano Prefectural Akaho high school, \endgraf
Komagane, 399-4117, \endgraf
Japan
}
\email{azuozhu96@gmail.com}


\maketitle

\begin{abstract}
In this note, we use isothermic coordinate systems to explore global properties of space-like surfaces with constant mean curvature in the Lorentz-Minkowski three-space. 
\end{abstract}

\section{Introduction}\label{sec1}
The existence of special coordinate systems often plays a crucial role in proving global properties in surface theory. A well-known example is the proof of the Hilbert theorem by using a Tschebyscheff coordinate system, and it states that a surface in the Euclidean $3$-space $\E^3$ with constant negative Gaussian curvature must have singular points. For further details, see \cite[Chapter IX]{Ho1983}. As another example, in \cite[Chapter 5]{Os1986}, a proof of the Bernstein theorem by using a special isothermal coordinate system is described, and it asserts that any entire minimal graph in $\E^3$ must be a plane. Recently, Hasanis, Savas-Halilaj, Vlachos \cite{HHV2011} characterized complex analytic curves within the class of entire minimal graphs in the Euclidean $4$-space $\E^4$ via their Jacobians also by using this coordinate system.

First we prove the existence (Theorem \ref{thm-ICC}) of isothermic coordinate systems for space-like surfaces in the Lorentz-Minkowski $3$-space $\L^3$ with non-zero constant mean curvature (abbrev. CMC). Theorem \ref{thm-ICC} is already a known result, see for example, \cite{BP1999}, \cite[Proposition 1.4]{Ino1997}, \cite{HJ2003}. We then demonstrate its application to global properties. Space-like surfaces in $\L^3$ with non-zero CMC have interesting global properties. See \cite{HKKT2021, Sa2008, Tr1982} for example. Recently, Dorfmeister, Inoguchi and Kobayashi \cite{DIK2016} showed the relationship between minimal surfaces in the $3$-dimensional Heisenberg group and space-like surfaces in $\L^3$ with non-zero CMC by the Sym-formula. 

We here provide a much simpler proof of some uniqueness theorems (Corollary \ref{cor-uni}, Theorem \ref{thm-KY}) by using the isothermic coordinate systems for complete space-like surfaces in $\L^3$ with non-zero CMC, as previously proved by Milnor \cite{Mi1983} and Yamada \cite{Yam1988}.  
    
Finally, the authors gratefully thank the useful advice from Shintaro Akamine, Atsufumi Honda  
and warm encouragement from Kazuki Takezawa during the preparation of this note. 
The authors also acknowledge the reviewer for a careful reading and valuable comments.  


\section{Preliminaries}\label{sec2} 
In this section, we briefly review fundamental notions of space-like surfaces in $\L^3$, analogous to those in the theory of surfaces in $\E^3$, using notation consistent with that of \cite{Ke2003}. 

\subsection{Basic theory}\label{sec2-1}
Let $\L^3 = (\R^3, \langle \,,\, \rangle_{L})$ be the Lorentz-Minkowski $3$-space with Lorentz metric   
\[
\langle \bm{a}, \bm{b} \rangle_{L} := a_{1}b_{1} + a_{2}b_{2} -a_{3}b_{3},  
\]
and vector product   
\[
\bm{a} \times_{L} \bm{b} := (a_{2}b_{3}-a_{3}b_{2}, a_{3}b_{1}-a_{1}b_{3}, -(a_{1}b_{2}-a_{2}b_{1})), 
\]
where $\bm{a}=(a_{1}, a_{2}, a_{3}), \bm{b}=(b_{1}, b_{2}, b_{3}) \in \R^3$. We can easily show that 
\[
\langle \bm{a} \times_{L} \bm{b}, \bm{a} \rangle_{L} = \langle \bm{a} \times_{L} \bm{b}, \bm{b} \rangle_{L} = 0 \quad \text{and} \quad \langle \bm{a} \times_{L} \bm{b}, \bm{c} \rangle_{L} 
= \det(\bm{a}, \bm{b}, \bm{c}), 
\]
where $\bm{a}, \bm{b}, \bm{c} \in \L^3$. Here, $\bm{a}, \bm{b}, \bm{c}$ are considered as column vectors and $\det(\bm{a}, \bm{b}, \bm{c})$ is the determinant of the $3\times 3$-matrix 
$(\bm{a}, \bm{b}, \bm{c})$. 

Let $D$ be a domain in the $(u, v)$-plane. A surface $X\colon D \to \L^3$ is called {\it space-like} if the induced metric, that is, the first fundamental form $\mathrm{I}_{X}$ of $X$ on the surface,  
is positive definite, and from now on we assume a surface is space-like. Set  
\begin{equation}\label{eq-EFG}
E= \langle X_{u}, X_{u} \rangle_{L}, \quad F= \langle X_{u}, X_{v} \rangle_{L}, \quad G= \langle X_{v}, X_{v} \rangle_{L}. 
\end{equation}
The {\it first fundamental form} is then 
\begin{equation}\label{eq-first-def1}
\mathrm{I}_{X} = E\,du^2 +2F\,dudv +G\,dv^2,  
\end{equation}
and the following three statements are equivalent: 
\begin{enumerate}
\item[(a)] $X\colon D \to \L^3$ is a space-like surface.  
\item[(b)] $(EG-F^2)(p) >0$ for any point $p=(u, v)\in D$.  
\item[(c)] $X_{u} \times_{L} X_{v}$ is time-like on $D$, that is, $\langle X_{u} \times_{L} X_{v}, X_{u} \times_{L} X_{v} \rangle_{L} <0$ for any point $p=(u, v) \in D$. 
\end{enumerate}

A surface $X\colon D \to \L^3$ is {\it complete} if $\mathrm{I}_{X}$ is complete.  

Set
\begin{equation}\label{eq-normal-def}
\bm{n}= - \dfrac{X_{u} \times_{L} X_{v}}{\sqrt{-\langle X_{u} \times_{L} X_{v}, X_{u} \times_{L} X_{v} \rangle_{L}}}  
\end{equation} 
to be the unit {\it normal vector field} of $X$. Define
\begin{equation}\label{eq-LMN}
L= -\langle X_{uu}, \bm{n} \rangle_{L}, \quad M= - \langle X_{uv}, \bm{n} \rangle_{L}, \quad N= -\langle X_{vv}, \bm{n} \rangle_{L}. 
\end{equation}
The quadratic form 
\begin{equation}\label{def-second-def}
\mathrm{I}\hspace{-1.2pt}\mathrm{I}_{X} = L\,du^2 +2M\,dudv+N\,dv^2
\end{equation}
is called the {\it second fundamental form} of $X$. 

Let $\kappa_{1}$ and $\kappa_{2}$ be the principal curvatures of $X$, that is, the eigenvalues of the Weingarten matrix 
\[
A:= \begin{pmatrix}
E & F \\
F & G \\
\end{pmatrix}^{-1}
\begin{pmatrix}
L & M \\
M & N \\
\end{pmatrix}. 
\]
Then $\kappa_{1}$ and $\kappa_{2}$ are real-valued functions on $D$. When the two principal curvatures are equal, we say that such a point on the surface is 
an {\it umbilic point}. Moreover, the {\it Gaussian curvature} $K$ and the {\it mean curvature} $H$ of $X$ are written as 
\[ 
K = -\kappa_{1}\kappa_{2} = -\dfrac{LN-M^2}{EG-F^2}, \quad H = \dfrac{\kappa_{1}+\kappa_{2}}{2} = \dfrac{EN-2FM+GL}{2(EG-F^2)}.
\]
This definition of the Gaussian curvature is chosen so as to be consistent with Gauss's Theorema Egregium. 

We now give two typical examples of complete space-like surfaces in $\L^3$ with non-zero CMC.  

\begin{example}[hyperbolic cylinder]\label{ex-hcyl}
The surface $X\colon \R^2 \to \L^3$ given by 
\begin{equation}\label{eq-hyp-cyl}
X(u, v) = (r\sinh{u}, rv, r\cosh{u}), \quad (u, v) \in \R^2, \quad r>0
\end{equation}
is called a {\it hyperbolic cylinder}. Then the unit normal vector field is 
\[
\bm{n}(u, v) = (\sinh{u}, 0, \cosh{u})
\]
and the fundamental forms are 
\begin{equation}\label{eq-hcyl-isoth}
\mathrm{I}_{X} = r^{2}\,(du^2 + dv^2), \quad \mathrm{I}\hspace{-1.2pt}\mathrm{I}_{X} = r\,du^2 
\end{equation}
with 
\[
K = 0, \quad H=\dfrac{1}{2r}. 
\]
\end{example}

\begin{example}[pseudosphere]\label{ex-pseu}
The surface $X\colon \R^2 \to \L^3$ given by 
\begin{equation}\label{eq-pseudo}
X(u, v) = (r\sinh{u}\cos{v}, r\sinh{u}\sin{v}, r\cosh{u}), \quad (u, v) \in \R^2, \quad r>0
\end{equation}
is called a {\it pseudosphere}, with  
\[
\bm{n}(u, v) = (\sinh{u}\cos{v}, \sinh{u}\sin{v}, \cosh{u}),  
\]
\[
\mathrm{I}_{X} = r^{2}\,du^2 + r^{2}\sinh^{2}{u}\,dv^2, \quad \mathrm{I}\hspace{-1.2pt}\mathrm{I}_{X} = r\,du^2 + r\sinh^{2}{u}\,dv^2, \quad K = -\dfrac{1}{r^2}, \quad H=\dfrac{1}{r}.  
\]
\end{example}

\begin{figure}[t]
    \centering
    \begin{minipage}{0.45\textwidth}
        \centering
        \includegraphics[scale=0.6]{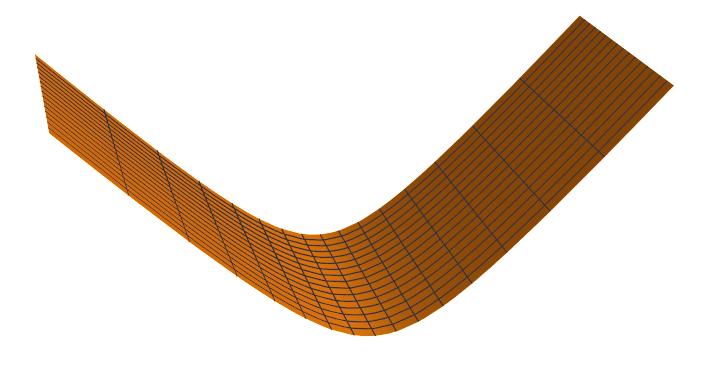}
        \caption{hyperbolic cylinder}
    \end{minipage}
    \hspace{0.05\textwidth}
    \begin{minipage}{0.45\textwidth}
        \centering
        \includegraphics[scale=0.5]{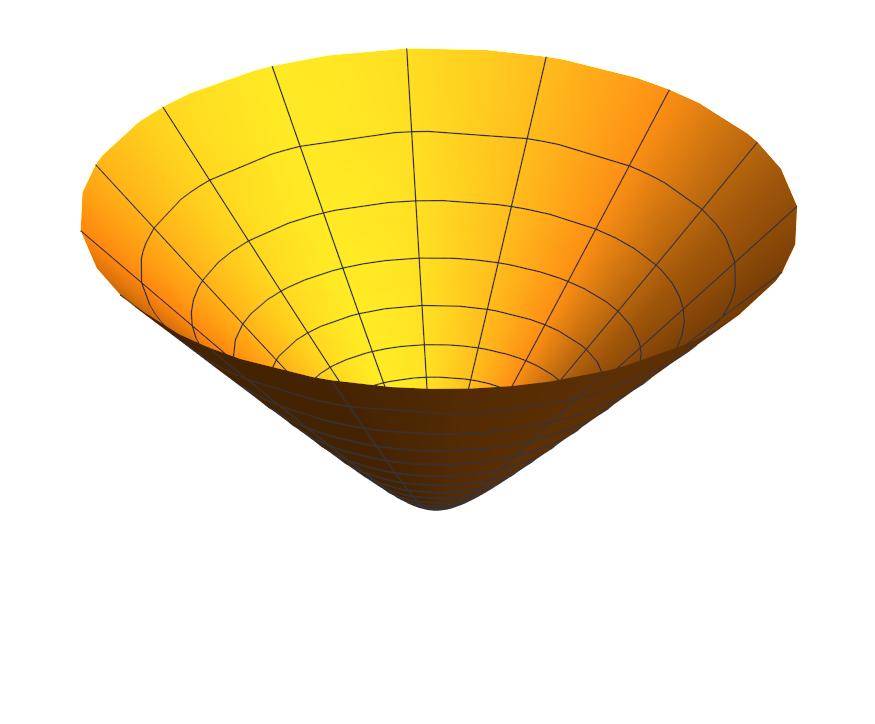}
        \caption{pseudosphere}
    \end{minipage}
\end{figure}

We next explain the integrability conditions of space-like surfaces in $\L^3$. Here, we assume that $i, j, k, \ldots$ are indices varying from $1$ to $2$, 
and that $u_{1}=u$ and $u_{2}=v$. We set 
\[ 
X_{i}= \dfrac{\partial X}{\partial u_{i}}, \quad X_{ij}= \dfrac{\partial^2 X}{\partial u_{i} \partial u_{j}},  \quad \bm{n}_{i} = \dfrac{\partial \bm{n}}{\partial u_{i}}, \quad  
\begin{pmatrix}
g_{11} & g_{12} \\
g_{21} & g_{22} \\
\end{pmatrix} = \begin{pmatrix}
E & F \\
F & G \\
\end{pmatrix}, \quad  \begin{pmatrix}
h_{11} & h_{12} \\
h_{21} & h_{22} \\
\end{pmatrix} = \begin{pmatrix}
L & M \\
M & N \\
\end{pmatrix}
\]
and $(g^{ij}) =(g_{ij})^{-1}$. With Christoffel symbols 
\[
\Gamma^{k}_{ij} := \dfrac{1}{2} \sum_{l} g^{kl} \left( \dfrac{\partial g_{li}}{\partial u_{j}} + \dfrac{\partial g_{lj}}{\partial u_{i}} - \dfrac{\partial g_{ij}}{\partial u_{l}} \right),  
\]
the Gauss and Codazzi equations are  
\begin{eqnarray}
\sum_{t}\, g_{tm} \left( \dfrac{\partial \Gamma^{t}_{ij}}{\partial u_{k}} - \dfrac{\partial \Gamma^{t}_{ik}}{\partial u_{j}} + \sum_{s} \left(\Gamma^{s}_{ij} \Gamma^{t}_{sk} - \Gamma^{s}_{ik} \Gamma^{t}_{sj} \right) \right) &=& - (h_{ij}h_{km} -h_{ik}h_{jm}),  \label{eq-GaussA} \\
\dfrac{\partial h_{ij}}{\partial u_{k}} - \dfrac{\partial h_{ik}}{\partial u_{j}} + \sum_{t} \left( \Gamma^{t}_{ij} h_{tk} - \Gamma^{t}_{ik} h_{tj} \right) &=& 0 \label{eq-CodazziA}, 
\end{eqnarray}
respectively. For a space-like surface in $\L^3$, the following fundamental theorem of surface theory holds. 
 
\begin{fact}\label{fact-fundame}
Let $g_{ij}$ be the coefficients of a Riemannian metric on a simply connected domain $D$, and let $h_{ij}\, (1\leq i, j\leq 2)$ be smooth functions with $h_{ij} =h_{ji}$ on $D$ 
satisfying the differential equations \eqref{eq-GaussA} and \eqref{eq-CodazziA}. Then there exists a space-like surface $X\colon D \to \L^3$ such that 
\[
\mathrm{I}_{X} = \sum_{i, j} g_{ij} du_{i}du_{j}, \quad  \mathrm{I}\hspace{-1.2pt}\mathrm{I}_{X} = \sum_{i, j} h_{ij} du_{i}du_{j}. 
\] 
Furthermore, such a surface $X(D)$ is unique up to isometries of $\L^3$.  
\end{fact}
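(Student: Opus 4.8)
The plan is to adapt Bonnet's fundamental theorem of surface theory to the Lorentzian setting, reconstructing $X$ by integrating the Gauss--Weingarten equations of the prospective surface, whose integrability is controlled precisely by the Gauss equation \eqref{eq-GaussA} and the Codazzi equation \eqref{eq-CodazziA}. First I would introduce unknowns $X_{1},X_{2}$ (the future partial derivatives of $X$) and $\bm{n}$ and write down the Lorentzian structure equations
\begin{equation*}
X_{ij}=\sum_{k}\Gamma^{k}_{ij}X_{k}+h_{ij}\bm{n},\qquad \bm{n}_{i}=\sum_{j,k}h_{ij}\,g^{jk}X_{k}.
\end{equation*}
In the first equation the tangential coefficients are forced by the definition of $\Gamma^{k}_{ij}$ together with metric compatibility $\partial_{k}g_{ij}=\sum_{l}(\Gamma^{l}_{ki}g_{lj}+\Gamma^{l}_{kj}g_{li})$, while the coefficient $h_{ij}$ of $\bm{n}$, and likewise the sign of the Weingarten terms, reflect the normalization $\langle\bm{n},\bm{n}\rangle_{L}=-1$ and the convention \eqref{eq-LMN}. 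Collecting $\mathcal{F}:=(X,X_{1},X_{2},\bm{n})$ (a column of four vectors in $\R^{3}$), these become a first-order linear system $\partial_{i}\mathcal{F}=\Omega_{i}\mathcal{F}$ $(i=1,2)$ whose coefficient matrices $\Omega_{i}$ are assembled from $g_{ij}$, $g^{ij}$, $h_{ij}$ and the first derivatives of $g_{ij}$.

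The key computational step is to verify that the compatibility condition $\partial_{2}\Omega_{1}-\partial_{1}\Omega_{2}+\Omega_{1}\Omega_{2}-\Omega_{2}\Omega_{1}=0$ of this system is equivalent to \eqref{eq-GaussA} together with \eqref{eq-CodazziA}. Expanding it block by block, the components along $X_{k}$ in the rows coming from $X_{ij}$ reproduce the Gauss equation \eqref{eq-GaussA}, the components along $\bm{n}$ in those rows reproduce the Codazzi equation \eqref{eq-CodazziA}, and the rows coming from $\bm{n}_{i}$ yield only identities already implied by \eqref{eq-GaussA} and \eqref{eq-CodazziA}; metric compatibility is used repeatedly. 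Once this is established, since $D$ is simply connected, the Frobenius theorem for completely integrable systems of linear total differential equations yields, for each prescribed value $\mathcal{F}(p_{0})$ at a base point $p_{0}\in D$, a unique smooth solution $\mathcal{F}$ on all of $D$.

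It remains to choose the initial data and to check that $\mathcal{F}$ produces the desired surface. Take $X(p_{0})\in\L^{3}$ arbitrarily and pick $X_{1}(p_{0}),X_{2}(p_{0}),\bm{n}(p_{0})$ so that $\langle X_{i}(p_{0}),X_{j}(p_{0})\rangle_{L}=g_{ij}(p_{0})$, $\langle X_{i}(p_{0}),\bm{n}(p_{0})\rangle_{L}=0$, $\langle\bm{n}(p_{0}),\bm{n}(p_{0})\rangle_{L}=-1$, with $\bm{n}(p_{0})$ having the sign prescribed by \eqref{eq-normal-def}; such a frame exists because $(g_{ij}(p_{0}))$ is positive definite. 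Differentiating $\langle X_{i},X_{j}\rangle_{L}-g_{ij}$, $\langle X_{i},\bm{n}\rangle_{L}$ and $\langle\bm{n},\bm{n}\rangle_{L}+1$ and substituting the Gauss--Weingarten equations (using metric compatibility once more) shows that these functions satisfy a homogeneous linear first-order system with vanishing initial value, hence vanish identically on $D$. Therefore $X$ is a spacelike immersion with $\mathrm{I}_{X}=\sum_{i,j}g_{ij}\,du_{i}du_{j}$; the sign choice for $\bm{n}(p_{0})$ propagates by connectedness, so $\bm{n}$ is the unit normal \eqref{eq-normal-def}; and $h_{ij}=-\langle X_{ij},\bm{n}\rangle_{L}$ gives $\mathrm{I}\hspace{-1.2pt}\mathrm{I}_{X}=\sum_{i,j}h_{ij}\,du_{i}du_{j}$. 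For uniqueness, if $X$ and $\tilde{X}$ are two such surfaces, the frames $\mathcal{F}(p_{0})$ and $\tilde{\mathcal{F}}(p_{0})$ have the same Gram matrix with respect to $\langle\,,\,\rangle_{L}$, so some isometry $\Phi$ of $\L^{3}$ (an element of $O(2,1)\ltimes\R^{3}$) carries the first frame to the second; then $\Phi\circ X$ and $\tilde{X}$ solve the same system with the same initial value and hence coincide on $D$.

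The main obstacle is the block-by-block verification in the key step: confirming that the integrability conditions of the Lorentzian structure equations match \eqref{eq-GaussA} and \eqref{eq-CodazziA} \emph{exactly}, with every sign --- in particular those coming from $\langle\bm{n},\bm{n}\rangle_{L}=-1$ and from the conventions \eqref{eq-normal-def} and \eqref{eq-LMN} --- placed correctly. The remaining ingredients, namely uniqueness for the linear system and the transitivity of $O(2,1)$ on frames consisting of a positive-definite tangent $2$-plane together with a unit timelike normal, are routine.
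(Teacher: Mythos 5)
The paper treats this statement as a background Fact and gives no proof of its own, so there is nothing internal to compare against; judged on its own terms, your argument is correct and is the standard Bonnet-type proof, transplanted to the Lorentzian signature. The one genuinely delicate point, which you rightly single out, is the sign bookkeeping forced by $\langle\bm{n},\bm{n}\rangle_{L}=-1$ together with the convention \eqref{eq-LMN}: since $h_{ij}=-\langle X_{ij},\bm{n}\rangle_{L}$ and $\langle\bm{n},\bm{n}\rangle_{L}=-1$, the normal coefficient in the Gauss formula is indeed $+h_{ij}$, and differentiating $\langle\bm{n},X_{j}\rangle_{L}=0$ gives $\langle\bm{n}_{i},X_{j}\rangle_{L}=+h_{ij}$, hence $\bm{n}_{i}=\sum_{j,k}h_{ij}g^{jk}X_{k}$ \emph{without} the minus sign familiar from the Euclidean Weingarten equation; feeding this into the tangential part of $X_{ijk}=X_{ikj}$ produces exactly the right-hand side $-(h_{ij}h_{km}-h_{ik}h_{jm})$ of \eqref{eq-GaussA}, while the normal part gives \eqref{eq-CodazziA} verbatim, so your claimed block-by-block match does hold with the paper's conventions. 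The remaining ingredients you invoke --- Frobenius on a simply connected domain, propagation of the Gram relations $\langle X_{i},X_{j}\rangle_{L}=g_{ij}$, $\langle X_{i},\bm{n}\rangle_{L}=0$, $\langle\bm{n},\bm{n}\rangle_{L}=-1$ via a homogeneous linear system, and transitivity of $O(2,1)\ltimes\R^{3}$ on frames consisting of a basis of a spacelike plane with prescribed Gram matrix plus a unit timelike normal --- are all standard and correctly deployed, and positive definiteness of $(g_{ij})$ guarantees both the existence of the initial frame and that the resulting $X$ is a spacelike immersion. In short: a complete and correct proof of a statement the paper merely cites.
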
 

\subsection{Moving frames}\label{sec2-2}
Let $X\colon D \to \L^{3}$ be a space-like surface. We take a triple of vector fields $\{ \bm{e}_{1}, \bm{e}_{2}, \bm{e}_{3} \}$ of $\L^3$ which is a Lorentz orthonormal basis, that is,   
\[ 
\langle \bm{e}_{i}, \bm{e}_{j} \rangle_{L} = \delta_{ij}, \quad \langle \bm{e}_{i}, \bm{e}_{3}  \rangle_{L} = 0, \quad  \langle \bm{e}_{3}, \bm{e}_{3}  \rangle_{L} = -1 \quad (i, j =1, 2), 
\]
and assume that $\bm{e}_{1}$ and $\bm{e}_{2}$ are tangent, and $\bm{e}_{3}$ is perpendicular, to the surface. We call this triple a {\it moving frame} of $X$.  

We take the exterior derivative  
\begin{equation}\label{eq-exderi}
dX = \omega_{1} \bm{e}_{1} + \omega_{2} \bm{e}_{2}, 
\end{equation}
where $\omega_{i}\, (i=1, 2)$ are $1$-forms on $D$, which form the dual basis of $\bm{e}_{i}$ considered as tangent vector fields on $D$, 
and  
$
\mathrm{I}_{X} = \omega_{1}^{2} + \omega_{2}^{2}. 
$

The exterior derivative of $\bm{e}_{A}\, (A=1, 2, 3)$  
\begin{equation}\label{eq-deriAB}
d\bm{e}_{A} = \displaystyle \sum_{B} \omega_{AB}\, \bm{e}_{B}  
\end{equation} 
satisfies, since $\{ \bm{e}_{1}, \bm{e}_{2}, \bm{e}_{3} \}$ is orthonormal,  
\[
\begin{cases}
    \omega_{AB} = \omega_{BA}, & (A, B) = (1, 3), (2, 3), (3, 1), (3,2), \\
    \omega_{AB} = -\omega_{BA},        & \text{otherwise}. 
\end{cases}
\]
Restricting $\omega_{AB}$ to $X(D)$, we can represent $\omega_{i3}$ as 
\begin{equation}\label{eq-ai3}
\omega_{i3} = \displaystyle \sum_{j=1}^{2} b_{ij}\, \omega_{j}, \quad (i=1, 2). 
\end{equation}

Taking the exterior derivative of \eqref{eq-exderi} and using $d(dX)=0$, we have
\begin{equation}\label{eq-closed1}
d\omega_{i} = \displaystyle \sum_{j=1}^{2} \omega_{ij}\wedge \omega_{j}, \quad \sum_{k=1}^{2} \omega_{k} \wedge \omega_{k3} =0. 
\end{equation}

The Gaussian curvature $K$ satisfies 
\begin{equation}\label{eq-connection}
d\omega_{12} = -K \omega_{1} \wedge \omega_{2},  
\end{equation} 
and the second fundamental form and mean curvature are  
\[
\mathrm{I}\hspace{-1.2pt}\mathrm{I}_{X} = b_{11} \omega_{1}^{2} + 2b_{12} \omega_{1} \omega_{2} + b_{22} \omega_{2}^{2}, \quad  H = (b_{11}+b_{22})/2.
\]

Taking the exterior derivative of \eqref{eq-deriAB},  we have 
\begin{eqnarray}
d\omega_{12} &=& \omega_{13} \wedge \omega_{32}, \label{eq-121332} \\
d\omega_{13} &=& \omega_{12} \wedge \omega_{23}, \quad d\omega_{23} = \omega_{21} \wedge \omega_{13}. \label{eq-131223}  
\end{eqnarray}
Then \eqref{eq-121332} is equivalent to 
$
K = - (b_{11}b_{22} -b_{12}^{2})
$ 
by \eqref{eq-ai3} and \eqref{eq-connection}. This is the Gauss equation. 

We assume that each point of $X(D)$ is not umbilic. Then the principal curvatures $\kappa_{1}$ and $\kappa_{2}$ differ from each other at each point of $X(D)$. We may assume $\kappa_{1}> \kappa_{2}$. Let the tangent vector fields $\bm{e}_{1}$ and $\bm{e}_{2}$ be the principal curvature vectors at each point. With respect to $\bm{e}_{1}$ and $\bm{e}_{2}$, the symmetric matrix $(b_{ij})$ is diagonalized. Thus we obtain $\omega_{13} = \kappa_{1} \omega_{1}$ and $\omega_{23} =\kappa_{2} \omega_{2}$. Then \eqref{eq-131223} is equivalent to
\begin{eqnarray}
d\kappa_{1} \wedge \omega_{1} + (\kappa_{1}-\kappa_{2})\, \omega_{12}\wedge \omega_{2} &=& 0, \label{eq_Codazzi1} \\
d\kappa_{2} \wedge \omega_{2} + (\kappa_{1}-\kappa_{2})\, \omega_{12}\wedge \omega_{1} &=& 0. \label{eq_Codazzi2}
\end{eqnarray}
This is the Codazzi equation for an umbilic-free space-like surface, in the theory of moving frames. 

\subsection{Isothermal coordinate systems}\label{sec2-3}
Let $X\colon D \to \L^{3}$ be a space-like surface. With suitable choice of parametrization, the first fundamental form becomes  
\begin{equation}\label{eq-isoth-01}
\mathrm{I}_{X} = \lambda^{2}(u, v) (du^2 +dv^2)
\end{equation} 
for some positive smooth function $\lambda (u, v)$, and we call the coordinates $(u, v)$ an {\it isothermal coordinate system}. 
Since $X_{uuv} =X_{uvu},\, X_{vvu}=X_{vuv}$, the Gauss and Codazzi equations are  
\begin{equation}\label{eq_Gauss_iso}
-\dfrac{1}{\lambda^2} \left( \dfrac{\partial^2}{\partial u^2} + \dfrac{\partial^2}{\partial v^2} \right) \log{\lambda} = -\dfrac{LN-M^2}{\lambda^{4}},  
\end{equation} 
\begin{equation}\label{eq-Codazzi-iso}
\dfrac{\partial L}{\partial v} - \dfrac{\partial M}{\partial u} - \dfrac{\partial \log{\lambda}}{\partial v} (L+N) = 0, \quad \dfrac{\partial M}{\partial v} - \dfrac{\partial N}{\partial u} + \dfrac{\partial \log{\lambda}}{\partial u} (L+N) = 0. 
\end{equation} 
As $H = (L+N)/2{\lambda}^2$, this implies
\begin{equation}\label{eq-mean-curv-iso}
\dfrac{1}{2} \dfrac{\partial (L-N)}{\partial v} - \dfrac{\partial M}{\partial u} = -\lambda^{2} \frac{\partial H}{\partial v}, \quad \dfrac{1}{2} \dfrac{\partial (L-N)}{\partial u} + \dfrac{\partial M}{\partial v} = \lambda^{2} \frac{\partial H}{\partial u}. 
\end{equation}

We set $z= u+\mathrm{i} v$ and consider the domain $D$ of $X$ as a subset of $\C$. We define a complex-valued function on $D$ by 
\begin{equation}\label{eq-Hopf-comp}
\phi (z) := \dfrac{L-N}{2} - \mathrm{i} M.  
\end{equation}
The complex quadratic differential $\phi (z) dz^2$ is called the {\it Hopf differential} of $X$. By \eqref{eq-mean-curv-iso}, $\phi (z)$ is holomorphic if and only if $H$ is constant. 
Then we can obtain 
\begin{equation}\label{eq_Hopf-diff}
|\phi (z)| = \dfrac{\lambda^2}{2}\,|\kappa_{1} -\kappa_{2}|. 
\end{equation}
Thus a space-like surface in $\L^3$ with CMC is either entirely umbilic or has isolated umbilic points, because $\phi (z)$ is holomorphic. 

\section{Main results}\label{sec3}
We first recall the definition of isothermic coordinate systems. 
Let $X\colon D \to \L^{3}$ be a space-like surface. The coordinates $(u, v)$ are called a {\it curvature line coordinate system} if the second fundamental form of $X$ satisfies 
\begin{equation}\label{eq-isoth-02}
\mathrm{I}\hspace{-1.2pt}\mathrm{I}_{X} = L\,du^2 +N\,dv^2. 
\end{equation}
When the coordinates $(u, v)$ satisfy both \eqref{eq-isoth-01} and \eqref{eq-isoth-02}, 
$(u, v)$ are called an {\it isothermic coordinate system}.  

\begin{example}\label{ex-hyper-ccc}
For the hyperbolic cylinder defined by \eqref{eq-hyp-cyl},  
by \eqref{eq-hcyl-isoth} the coordinates $(u, v)$ are an isothermic coordinate system.  
\end{example}

We next show the existence of an isothermic coordinate system for umbilic-free space-like surfaces in $\L^3$ with non-zero CMC. 

\begin{theorem}\label{thm-ICC}
For a simply-connected domain $D$ and a space-like surface $X\colon D \to \L^3$ with CMC-$H\, (H>0)$,  
assume that every $p\in D$ is non-umbilic.  
Then there exists an isothermic coordinate system $(u, v)$ on $D$ satisfying  
\begin{equation}\label{eq-12}
\mathrm{I}_{X} = e^{\omega} (du^2 +dv^2), \quad \mathrm{I}\hspace{-1.2pt}\mathrm{I}_{X} = 2He^{\frac{\omega}{2}} \left( \cosh\frac{\omega}{2} du^2 + \sinh\frac{\omega}{2} dv^2 \right), 
\end{equation}
where $\omega =\omega (u, v)$ satisfies 
\begin{equation}\label{eq-LSG}
\omega_{uu} + \omega_{vv} -4H^2 \sinh \omega = 0. 
\end{equation}

Conversely, for given $H>0$ and solution $\omega (u, v)$ of \eqref{eq-LSG}, 
there exists a space-like surface with CMC-$H$ whose fundamental forms are given by \eqref{eq-12}, which is unique 
up to isometries of $\L^3$.   
\end{theorem}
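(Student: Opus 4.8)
The plan is to prove the two directions separately. The direct direction rests on the holomorphicity of the Hopf differential (established in Section~\ref{sec2-3} because $H$ is constant) together with a conformal change of coordinate adapted to it, while the converse rests on a direct check of the Gauss and Codazzi equations followed by the fundamental theorem of surface theory, Fact~\ref{fact-fundame}.

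For the direct direction, I would first fix a local isothermal coordinate $z_{0}=u_{0}+\mathrm{i}v_{0}$ in a neighbourhood of $p$ (the local existence of isothermal coordinates for a smooth Riemannian metric being classical), so that $\mathrm{I}_{X}=\mu^{2}\,|dz_{0}|^{2}$ for some positive function $\mu$, and the Hopf differential $\phi\,dz_{0}^{2}$ is holomorphic. Since $p$ is non-umbilic, $\phi(p)\neq 0$; after shrinking to a simply-connected neighbourhood of $p$ on which $\phi$ is nowhere zero, I choose a branch of $\sqrt{\phi}$ and set $w=u+\mathrm{i}v:=H^{-1/2}\int_{z_{0}(p)}^{z_{0}}\sqrt{\phi(\zeta)}\,d\zeta$. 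Then $dw/dz_{0}=H^{-1/2}\sqrt{\phi}\neq 0$, so after a further shrinking $w$ is a genuine (hence conformal) coordinate. The key point is the coordinate-free identity $\mathrm{I\!I}_{X}=H\,\mathrm{I}_{X}+\operatorname{Re}(\phi\,dz_{0}^{2})$, valid in any isothermal coordinate and immediate from $H=(L+N)/(2\mu^{2})$ and $\phi=(L-N)/2-\mathrm{i}M$. Since $H\,dw^{2}=\phi\,dz_{0}^{2}$, this identity reads in the $w$-coordinate as $\mathrm{I\!I}_{X}=He^{\omega}(du^{2}+dv^{2})+H(du^{2}-dv^{2})$, where $e^{\omega}$ is the conformal factor of $\mathrm{I}_{X}$ there; using $H(e^{\omega}+1)=2He^{\omega/2}\cosh(\omega/2)$ and $H(e^{\omega}-1)=2He^{\omega/2}\sinh(\omega/2)$ this is exactly \eqref{eq-12}, and $(u,v)$ is isothermic because no $du\,dv$ term occurs. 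Finally, substituting $\lambda^{2}=e^{\omega}$, $L=H(e^{\omega}+1)$, $M=0$, $N=H(e^{\omega}-1)$ into the Gauss equation \eqref{eq_Gauss_iso} and using $LN/e^{\omega}=H^{2}(e^{\omega}-e^{-\omega})=2H^{2}\sinh\omega$ gives precisely \eqref{eq-LSG}.

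For the converse, given $H>0$ and a solution $\omega$ of \eqref{eq-LSG} on a simply-connected domain $D$, I would set $E=G=e^{\omega}$, $F=0$, $L=2He^{\omega/2}\cosh(\omega/2)$, $M=0$, $N=2He^{\omega/2}\sinh(\omega/2)$, and verify that these data satisfy the Gauss equation \eqref{eq-GaussA} and the Codazzi equations \eqref{eq-CodazziA}. In isothermal form, the Gauss equation \eqref{eq_Gauss_iso} reduces exactly to \eqref{eq-LSG}, and the two Codazzi equations \eqref{eq-Codazzi-iso} become identities because $M\equiv 0$ and $L-N\equiv 2H$ is constant (equivalently, \eqref{eq-mean-curv-iso} holds trivially since $H$ is constant). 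Fact~\ref{fact-fundame} then produces a space-like surface $X\colon D\to\L^{3}$ with $\mathrm{I}_{X}$ and $\mathrm{I\!I}_{X}$ as in \eqref{eq-12}, unique up to isometries of $\L^{3}$, and the one-line computation $(EN-2FM+GL)/(2(EG-F^{2}))=(L+N)/(2e^{\omega})=H$ shows that its mean curvature is the constant $H$.

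The only genuine obstacle is the bookkeeping in the direct direction: the coordinate $w=H^{-1/2}\int\sqrt{\phi}$ is defined only away from the (isolated) umbilic points of $X$ and is injective only after shrinking the domain, so the precise conclusion one obtains is that an isothermic coordinate of the stated form exists on a simply-connected neighbourhood of $p$; beyond this, everything is routine substitution into the Gauss and Codazzi equations recorded in Section~\ref{sec2-3}.
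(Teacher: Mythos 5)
Your proposal is correct, and the converse direction is essentially the paper's (check Gauss and Codazzi for the data \eqref{eq-12}, then invoke Fact~\ref{fact-fundame}); but your existence argument takes a genuinely different route. The paper works entirely with moving frames: it takes the orthonormal frame of principal directions, uses the Codazzi equations \eqref{eq_Codazzi1}--\eqref{eq_Codazzi2} together with constancy of $H$ to show that the $1$-forms $\theta_{i}=\sqrt{(\kappa_{1}-\kappa_{2})/2}\;\omega_{i}$ are closed, obtains the coordinates $u_{i}$ from the Poincar\'e lemma on the simply-connected $D$, and then rescales. You instead start from an isothermal coordinate $z_{0}$, use the holomorphicity and non-vanishing of the Hopf differential $\phi\,dz_{0}^{2}$ near the non-umbilic point, and pass to the conformal coordinate $w=H^{-1/2}\int\sqrt{\phi}\,dz_{0}$, so that the coordinate-free decomposition of the second fundamental form as $H\,\mathrm{I}_{X}+\operatorname{Re}(\phi\,dz_{0}^{2})$ becomes \eqref{eq-12}; here the holomorphicity of $\phi$ plays exactly the role that the Codazzi equations play in the frame computation. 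Your route presupposes an isothermal coordinate to begin with (which the paper's construction produces from scratch), but it buys a cleaner explanation of \emph{why} the construction works, and it localizes cleanly at a single non-umbilic point. On the one point where the two arguments genuinely differ in strength: you honestly flag that your coordinate is only produced on a simply-connected neighbourhood of $p$ after shrinking, whereas the paper's proof silently strengthens the hypothesis to $\kappa_{1}>\kappa_{2}$ on all of $D$ so that the Poincar\'e lemma applies globally (and even then neither proof addresses injectivity of the resulting map $(u,v)$ on $D$). Since the stated hypothesis is only that the single point $p$ is non-umbilic, your local conclusion is in fact all that can be extracted, and your reading of the statement is the defensible one; no essential gap.
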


\begin{proof} 
Assume $\kappa_{1}$ and $\kappa_{2}$ of $X$ satisfy $\kappa_{1} > \kappa_{2}$ on $D$ and 
$\bm{e}_{i}\, (i=1, 2)$ is a unit vector field determining the principal direction corresponding to $\kappa_{i}$. 
Then $\bm{e}_{1}$ and $\bm{e}_{2}$ are orthogonal. Let $\omega_{i}$ be the dual $1$-form of $\bm{e}_{i}$. By \eqref{eq-closed1}, \eqref{eq_Codazzi1} and
\eqref{eq_Codazzi2}, we obtain
\begin{equation}\label{eq-omega} 
d\omega_{1} = \omega_{12} \wedge \omega_{2} = -\dfrac{1}{\kappa_{1}-\kappa_{2}} (d\kappa_{1}\wedge \omega_{1}), \quad  d\omega_{2} = \omega_{21} \wedge \omega_{1} = \dfrac{1}{\kappa_{1}-\kappa_{2}} (d\kappa_{2}\wedge \omega_{2}). 
\end{equation}
We consider the $1$-form $\theta_{i}$ defined by 
\[
\theta_{i}  = \sqrt{\dfrac{\kappa_{1}-\kappa_{2}}{2}}\,\omega_{i}, \quad (i=1, 2). 
\]
By \eqref{eq-omega}, we have  
\begin{equation}\label{eq-theta1}
d\theta_{1} = d\sqrt{\kappa_{1} -H} \wedge \omega_{1} - \dfrac{1}{2\sqrt{\kappa_{1}-H}}\, (d\kappa_{1} \wedge \omega_{1}).  
\end{equation}
On the other hand,  we obtain 
\[
d\sqrt{\kappa_{1} -H} = \dfrac{1}{2\sqrt{\kappa_{1}-H}}\, d\kappa_{1}, 
\]
because $H (=(\kappa_{1}+\kappa_{2})/2)$ is constant. Thus $\theta_{1}$ is closed. Similarly, $\theta_{2}$ is closed. 
By the Poincar\'e lemma, there exists a function $u_{i}$ on $D$ such that $\theta_{i} =du_{i}\, (i=1, 2)$. 
If we choose $\tau$ so that  
\[
e^{-\tau} = \dfrac{\kappa_{1}-\kappa_{2}}{2}, 
\] 
we have $\omega_{i} =e^{(\tau /2)} du_{i}\, (i=1, 2)$ and 
\[
\mathrm{I}_{X} = e^{\tau} (du_{1}^{2}+du_{2}^{2}), \quad \mathrm{I}\hspace{-1.2pt}\mathrm{I}_{X} = (He^{\tau} +1) du_{1}^{2} +  (He^{\tau} -1) du_{2}^{2}. 
\]
Moreover, if we choose $\omega, u, v$ so that $e^{\tau} =e^{\omega}/H,\, u_{1} =\sqrt{H}u, u_{2} =\sqrt{H}v$, we obtain \eqref{eq-12} and  \eqref{eq-LSG}. 

The converse follows by Fact \ref{fact-fundame}, because the Codazzi equation holds automatically for \eqref{eq-12}, and 
\eqref{eq-LSG} is the Gauss equation \eqref{eq_Gauss_iso}. 
\end{proof}

We will apply Theorem \ref{thm-ICC} to complete space-like CMC surfaces in $\L^3$. 
Let $\Sigma$ be a connected and oriented $2$-manifold. 
It is well-known that every complete space-like surface $X\colon \Sigma \to \L^3$ is entire (e.g. see \cite[Proposition 9.3]{AMR2016}). 
In particular, $\Sigma$ is conformally equivalent to either the entire plane $\R^2$ or the unit disk $\D$, because $\Sigma$ is simply-connected and non-compact. 
On the other hand, Yamada \cite[Proposition 3.1]{Yam1988} proved the following result. 

\begin{fact}\label{fa-Yam}
The only solution of \eqref{eq-LSG} on the entire plane $\R^2$ is the trivial solution $\omega \equiv 0$. 
\end{fact}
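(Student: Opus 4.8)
The plan is to reduce the statement to two classical facts of potential theory: a Keller--Osserman a priori bound and the Liouville theorem for subharmonic functions on the plane. Write $\Delta = \partial_u^2+\partial_v^2$. The bridge between the sinh--Gordon equation \eqref{eq-LSG} and these tools is the auxiliary function $\varphi := \cosh\omega \geq 1$: differentiating and using \eqref{eq-LSG}, one finds
\[
\Delta\varphi = (\cosh\omega)(\omega_u^2+\omega_v^2) + (\sinh\omega)\,\Delta\omega = (\cosh\omega)(\omega_u^2+\omega_v^2) + 4H^2\sinh^2\omega ,
\]
and since $\sinh^2\omega = \varphi^2-1$ this gives
\[
\Delta\varphi \;\geq\; 4H^2(\varphi^2-1) \;\geq\; 0 \qquad \text{on } \R^2 .
\]
Thus $\varphi$ is subharmonic, and it is in fact a subsolution of the semilinear equation $\Delta\varphi = g(\varphi)$ with the nonnegative, nondecreasing, superlinearly growing nonlinearity $g(s) = 4H^2(s^2-1)$, $s\geq 1$. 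I would then carry out the proof in three steps: (i) $\varphi$ is bounded above on $\R^2$; (ii) hence $\varphi$ is constant; (iii) hence $\omega \equiv 0$.

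I expect step (i) to be the main obstacle: it is the only point where the superlinear growth of $\sinh$ genuinely enters, and it is what rules out the unbounded entire solutions that the soft arguments of (ii)--(iii) cannot reach. Since $\int_1^{s} g(t)\,dt \sim \tfrac{4H^2}{3}s^{3}$ as $s\to\infty$, the Keller--Osserman integrability condition $\int_1^{\infty}\big(\int_1^{s} g(t)\,dt\big)^{-1/2}\,ds < \infty$ holds, so the classical Keller--Osserman theorem forces every $C^2$ subsolution of $\Delta\varphi = g(\varphi)$ on all of $\R^2$ to be bounded above; in particular $\sup_{\R^2}\varphi < \infty$. If a self-contained argument is wanted, the same integrability condition provides, for each $R>0$, a radial solution $\psi_R$ of $\Delta\psi = g(\psi)$ on the disk $B_R(p_0)$ that blows up on $\partial B_R(p_0)$; the comparison principle (valid because $g$ is nondecreasing) yields $\varphi(p_0) \leq \psi_R(p_0)$, and the monotonicity $R \mapsto \psi_R$ yields $\varphi(p_0) \leq \psi_1(p_0)$ for every $p_0$, with $\psi_1(p_0)$ a finite constant independent of $p_0$ by translation invariance --- so $\varphi$ is bounded uniformly on $\R^2$.

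Granting step (i), the rest is routine. For (ii): a subharmonic function on $\R^2$ that is bounded above is constant --- its spherical means about any point are convex in $\log r$, nondecreasing and bounded, hence constant, so that $\varphi$ has the mean value property, is harmonic, and is constant by the Liouville theorem. (Alternatively, once $\omega$ is known bounded one may apply the Omori--Yau maximum principle on the complete flat plane $\R^2$ to $\pm\omega$, obtaining $\sinh(\sup\omega) \leq 0$ and $\sinh(\inf\omega) \geq 0$, that is $\omega \equiv 0$, at once.) For (iii): if $\varphi = \cosh\omega$ is constant then $\omega$ is constant on the connected domain $\R^2$, so $\Delta\omega \equiv 0$, and \eqref{eq-LSG} then gives $\sinh\omega \equiv 0$, that is $\omega \equiv 0$. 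In short, the whole weight of the proof falls on the a priori bound of step (i); the passage from boundedness to triviality is elementary.
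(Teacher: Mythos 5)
The paper does not actually prove this statement: it is quoted as a Fact from Yamada's paper (Proposition 3.1 of \cite{Yam1988}), so there is no internal proof to compare against. Your argument is a correct, essentially self-contained proof, and it is in the same spirit as the standard Osserman-type arguments used for the elliptic sinh--Gordon equation. The computation $\Delta(\cosh\omega)=(\cosh\omega)(\omega_u^2+\omega_v^2)+4H^{2}\sinh^{2}\omega\ge 4H^{2}(\cosh^{2}\omega-1)$ is right; the nonlinearity $g(s)=4H^{2}(s^{2}-1)$, which is nonnegative and nondecreasing on $[1,\infty)$ (the full range of $\varphi$), does satisfy the Keller--Osserman condition; and the three external ingredients you invoke (existence of boundary blow-up radial solutions on disks under Keller--Osserman, the comparison principle for a nondecreasing nonlinearity, and the Liouville theorem for subharmonic functions bounded above on $\R^{2}$) are all classical and correctly applied. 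Two small remarks, neither of which is a gap. First, the sharp form of Osserman's theorem already yields $g(\sup\varphi)\le 0$, i.e.\ $\sup\cosh\omega\le 1$, so $\omega\equiv 0$ follows in one stroke and steps (ii)--(iii) become unnecessary; your weaker ``bounded above'' conclusion plus parabolicity of the plane is a fine substitute, but step (ii) is the only place where two-dimensionality is genuinely used, whereas the sharp Osserman statement works in every dimension. Second, the appeal to monotonicity of $R\mapsto\psi_{R}$ is superfluous, since the comparison on the single disk $B_{1}(p_{0})$ already gives $\varphi(p_{0})\le\psi_{1}(p_{0})$, a constant independent of $p_{0}$ by translation invariance; and in step (iii) it is slightly cleaner to read off $\sinh\omega\equiv 0$ directly from $0=\Delta\varphi\ge 4H^{2}\sinh^{2}\omega$ than to argue that $\cosh\omega$ constant forces $\omega$ constant (which is true, but only after noting that the level set of $\cosh$ is discrete).
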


\begin{remark}\label{rmk-Nagoya}
Consider the ordinary differential equation
\begin{equation}\label{eq-Nagoya}
\omega ''(r) +\dfrac{1}{r} \omega' (r) = 4H^2 \sinh \omega (r) \quad \mathrm{for} \quad r\geq 0, 
\end{equation}
where $'\,:= d/dr$. Then the solution of \eqref{eq-Nagoya} is a radially symmetric solution of \eqref{eq-LSG} with $r=\sqrt{u^2 +v^2}$. 
The equation \eqref{eq-Nagoya} is a Painlev\'e equation of the third kind. 
\end{remark}

By combining Theorem \ref{thm-ICC} and Fact \ref{fa-Yam}, we can show the following uniqueness theorem.  

\begin{corollary}\label{cor-uni}
Any complete non-umbilic space-like surface in $\L^3$ with non-zero CMC-$H$ defined on the entire plane $\R^2$ must be a hyperbolic cylinder. 
\end{corollary}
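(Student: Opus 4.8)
The plan is to combine Theorem \ref{thm-ICC} with Fact \ref{fa-Yam} and the uniqueness clause of Fact \ref{fact-fundame}. First, replacing the unit normal $\bm{n}$ by $-\bm{n}$ if necessary, I may assume $H>0$, so that Theorem \ref{thm-ICC} applies. As recalled before the statement, a complete space-like surface $X$ is entire, hence the underlying surface $\Sigma$ is simply connected and non-compact; I interpret the hypothesis that $X$ is ``defined on the whole plane $\R^2$'' as the statement that $\Sigma$ is conformally equivalent to $\C$ rather than to $\D$. Since $X$ has no umbilic point and $\Sigma$ is simply connected, Theorem \ref{thm-ICC} produces an isothermic coordinate system $(u,v)$ in which the fundamental forms have the form \eqref{eq-12} for a function $\omega$ solving the sinh-Gordon equation \eqref{eq-LSG}; in these coordinates $e^{\omega}=H/\sqrt{H^2+K}$, where $K$ is the Gaussian curvature.

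The crucial point is that the coordinate map $\Phi=(u,v)\colon\Sigma\to\R^2$ is a diffeomorphism onto \emph{all} of $\R^2$, so that $\omega$ becomes a bona fide solution of \eqref{eq-LSG} on the entire plane. That $\Phi$ is a local diffeomorphism is immediate, because on the umbilic-free surface the principal directions $\bm{e}_1,\bm{e}_2$ form a frame and hence their dual $1$-forms satisfy $du\wedge dv\neq0$ everywhere; thus $h:=\Phi^{*}(du^2+dv^2)=e^{-\omega}\mathrm{I}_X$ is a flat metric on $\Sigma$. If $h$ is complete, then $\Phi\colon(\Sigma,h)\to(\R^2,du^2+dv^2)$ is a local isometry out of a complete surface, hence a Riemannian covering map (in particular surjective), hence — $\R^2$ being simply connected — a diffeomorphism. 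Granting this, Fact \ref{fa-Yam} forces $\omega\equiv0$, so \eqref{eq-12} collapses to $\mathrm{I}_X=du^2+dv^2$ and $\mathrm{I\!I}_X=2H\,du^2$; these coincide with the fundamental forms of the hyperbolic cylinder \eqref{eq-hyp-cyl} with $r=1/(2H)$ (reparametrized by $u\mapsto ru,\ v\mapsto rv$), so the uniqueness clause of Fact \ref{fact-fundame} identifies $X(\R^2)$ with that hyperbolic cylinder up to isometries of $\L^3$.

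The main obstacle is therefore the completeness of $h=e^{-\omega}\mathrm{I}_X$, i.e.\ that the isothermic (curvature-line) coordinates really sweep out all of $\R^2$ rather than a proper subdomain along whose boundary $\omega$ escapes to $+\infty$ while $\mathrm{I}_X$ stays complete. Since $\mathrm{I}_X$ is complete, $h$ is complete as soon as $\omega$ is bounded from above, which by $e^{\omega}=H/\sqrt{H^2+K}$ amounts to keeping $K$ bounded away from $-H^2$, a uniform non-umbilicity estimate. It is precisely here that the parabolic conformal type $\Sigma\cong\C$ has to be exploited, through a maximum-principle or Keller--Osserman type argument for \eqref{eq-LSG}; this is the step carried out analytically by Milnor and Yamada, and in writing the proof I would either reproduce such an a priori bound or invoke it. Everything else is formal.
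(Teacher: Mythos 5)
Your proposal follows the same route as the paper's proof: pass to the isothermic coordinates of Theorem \ref{thm-ICC}, invoke Fact \ref{fa-Yam} to conclude $\omega\equiv 0$, and identify the resulting fundamental forms with those of a hyperbolic cylinder via the uniqueness part of Fact \ref{fact-fundame}. The formal steps you do carry out are all correct: the relation $e^{\omega}=H/\sqrt{H^2+K}$, the identification of $\mathrm{I}_X=du^2+dv^2$ and $\mathrm{I}\hspace{-1.2pt}\mathrm{I}_X=2H\,du^2$ with the cylinder \eqref{eq-hyp-cyl} of radius $r=1/(2H)$, the observation that $(u,v)$ is a local diffeomorphism, and the covering-space argument that a local isometry from a complete flat simply connected surface onto $\R^2$ is a global isometry.

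The step you leave open is, however, a genuine gap, and it is exactly the step on which the corollary turns: one must show that $h=e^{-\omega}\mathrm{I}_X=\frac{\kappa_1-\kappa_2}{2H}\,\mathrm{I}_X$ is complete, equivalently that $\omega$ is bounded above, equivalently that $\kappa_1-\kappa_2$ is bounded away from $0$. Without such a bound the isothermic coordinate $z=u+\mathrm{i}v$ is only an entire local biholomorphism of $\Sigma\cong\C$ into $\C$ (since the Hopf differential equals $H\,dz^2$ in isothermic coordinates, one has $z=\int\sqrt{\phi_0(w)/H}\,dw$ for a nonvanishing entire function $\phi_0$ in a global conformal coordinate $w$), and such a map need not be injective or surjective --- think of $z=e^{w}$ --- so $\omega$ need not descend to a solution of \eqref{eq-LSG} on all of $\R^2$ and Fact \ref{fa-Yam} cannot be applied. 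Note also that the missing bound is precisely hypothesis \eqref{eq-eps} of Theorem \ref{thm-KY} (under \eqref{eq-eps} one gets $e^{-\omega}\geq\varepsilon/(2H)$, so $h\geq\frac{\varepsilon}{2H}\mathrm{I}_X$ is complete and your argument closes); so ``invoking'' the a priori bound of Milnor or Yamada at this point essentially amounts to assuming the stronger theorem, and the corollary would lose its status as an independent, elementary statement. In fairness, the paper's own proof disposes of this point in a single unsupported sentence (``we can take an isothermic coordinate system on the whole plane $\R^2$''), so your write-up is more candid about where the analytic difficulty actually lies; but as submitted, your proof is incomplete at exactly that place.
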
  

\begin{proof} 
By reversing the unit normal vector if necessary, we may assume $H>0$. Moreover, we may assume $H=1/2$ without loss of generality, because a homothety of the surface 
scales the mean curvature accordingly. Since the surface is non-umbilic and simply-connected, we can take an isothermic coordinate system $(u, v)$ as in Theorem \ref{thm-ICC} 
on the entire plane $\R^2$. 
By Fact \ref{fa-Yam}, we have $\omega \equiv 0$. Thus 
\[
\mathrm{I}_{X} = du^2 +dv^2, \quad \mathrm{I}\hspace{-1.2pt}\mathrm{I}_{X} = du^2 
\]
and the corresponding space-like surface in $\L^3$ is a hyperbolic cylinder 
by Example \ref{ex-hcyl} and Fact \ref{fact-fundame}. 
\end{proof}

Milnor \cite[Theorem 8]{Mi1983} and Yamada \cite[Theorem]{Yam1988} proved the following uniqueness theorem: 

\begin{theorem}\label{thm-KY}
Any complete space-like surface in $\L^3$ with non-zero CMC and principal curvatures $\kappa_{1}$ and $\kappa_{2}$ satisfying  
\begin{equation}\label{eq-eps}
(\kappa_{1} -\kappa_{2})^{2} \geq \varepsilon^{2}
\end{equation} 
for some positive number $\varepsilon$ must be a hyperbolic cylinder. 
\end{theorem}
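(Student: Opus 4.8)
The plan is to reduce to the sinh-Gordon equation \eqref{eq-LSG} via Theorem \ref{thm-ICC}, to show that the resulting isothermic coordinates are in fact \emph{global} coordinates identifying the surface with $\R^2$, and then to conclude with Fact \ref{fa-Yam}.

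First I would normalize. Reversing the unit normal if necessary we may take $H>0$, and applying a homothety of $\L^3$ (which rescales the principal curvatures, hence only changes the constant $\varepsilon$ in \eqref{eq-eps}) we may assume $H=1/2$. Since $(\kappa_1-\kappa_2)^2\geq\varepsilon^2>0$ everywhere, $X$ is nowhere umbilic, and by the discussion preceding Fact \ref{fa-Yam} the underlying surface $\Sigma$ is simply connected; hence Theorem \ref{thm-ICC} applies on all of $\Sigma$ and produces globally defined functions $u,v$ (the Poincar\'e-lemma primitives in its proof) with $\mathrm{I}_X=e^{\omega}(du^2+dv^2)$ and $\mathrm{I}\hspace{-1.2pt}\mathrm{I}_X$ as in \eqref{eq-12}, where $\omega$ solves \eqref{eq-LSG}. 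Because $du$ and $dv$ are positive multiples of an orthonormal coframe, the map $F:=(u,v)\colon\Sigma\to\R^2$ is an immersion, hence a local diffeomorphism; it remains to upgrade this to a global diffeomorphism onto $\R^2$.

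The key step --- and the place where hypothesis \eqref{eq-eps} really enters --- is this upgrade. In the notation of the proof of Theorem \ref{thm-ICC} one has $e^{-\tau}=(\kappa_1-\kappa_2)/2$ and $e^{\omega}=He^{\tau}$ (up to the constant rescaling of coordinates carried out there), so \eqref{eq-eps} yields $e^{\omega}\leq 2H/\varepsilon$: the conformal factor of $\mathrm{I}_X$ in the chart $F$ is bounded above. Let $g_0=du^2+dv^2$ be the flat metric on $\R^2$. Then $F^{*}g_0=e^{-\omega}\,\mathrm{I}_X\geq\frac{\varepsilon}{2H}\,\mathrm{I}_X$ on $\Sigma$, and since $\mathrm{I}_X$ is complete, every divergent curve has infinite $F^{*}g_0$-length, so $F^{*}g_0$ is a complete Riemannian metric. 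A local isometry from a complete Riemannian manifold onto a connected manifold is a covering map, so $F\colon(\Sigma,F^{*}g_0)\to(\R^2,g_0)$ is a covering; as $\R^2$ is simply connected and $\Sigma$ is connected, $F$ is a diffeomorphism of $\Sigma$ onto $\R^2$. (In particular this pins the conformal type of $\Sigma$ down to $\R^2$ rather than $\D$.)

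With $(u,v)$ now a global coordinate system on $\R^2$, $\omega$ is a solution of \eqref{eq-LSG} on the whole plane, so Fact \ref{fa-Yam} forces $\omega\equiv 0$. Substituting into \eqref{eq-12} gives $\mathrm{I}_X=du^2+dv^2$ and $\mathrm{I}\hspace{-1.2pt}\mathrm{I}_X=du^2$ on $\R^2$, which are exactly the first and second fundamental forms of the $r=1$ hyperbolic cylinder of Example \ref{ex-hcyl}; by the uniqueness clause of Fact \ref{fact-fundame}, $X$ is this hyperbolic cylinder up to an isometry of $\L^3$, and undoing the initial homothety shows the original surface is a hyperbolic cylinder. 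The only non-routine ingredient is the completeness argument converting the curvature gap \eqref{eq-eps} into the global-chart statement; the rest is an application of Theorem \ref{thm-ICC}, Fact \ref{fa-Yam}, and Fact \ref{fact-fundame}.
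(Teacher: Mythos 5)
Your proof is correct, and it takes a genuinely different route from the paper's. The paper splits according to the conformal type of $\Sigma$: the case $\Sigma\cong\R^2$ is Corollary \ref{cor-uni}, and the case $\Sigma\cong\D$ is excluded by a complex-analytic argument --- completeness forces $e^{\omega}\to\infty$ at $\partial\D$, so by \eqref{eq_Hopf-diff} and \eqref{eq-eps} the Hopf differential $\phi$ (computed in the uniformizing coordinate) is a non-vanishing holomorphic function with $|\phi|\to\infty$ at the boundary, and the Cauchy integral formula applied to $1/\phi$ gives a contradiction. You instead observe that \eqref{eq-eps} is exactly an upper bound $e^{\omega}\le 2H/\varepsilon$ on the conformal factor in the isothermic chart, so the flat metric $du^2+dv^2=e^{-\omega}\mathrm{I}_X$ is complete on $\Sigma$; the standard fact that a local isometry from a complete manifold to a connected one is a covering then makes $(u,v)$ a global diffeomorphism $\Sigma\to\R^2$, and Fact \ref{fa-Yam} plus Fact \ref{fact-fundame} finish as in Corollary \ref{cor-uni}. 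What your version buys: it stays entirely within Riemannian geometry (no boundary behaviour of holomorphic functions), it makes transparent where the gap hypothesis \eqref{eq-eps} enters, and it actually shows the disk conformal type cannot occur under \eqref{eq-eps} rather than deriving a contradiction inside that case; it also sidesteps the need to relate the isothermic chart to the uniformizing chart of $\D$, which the paper's proof passes over quickly. The only points to state carefully are (i) that Theorem \ref{thm-ICC}, though phrased for planar domains, applies verbatim to a simply connected abstract surface since its proof is the Poincar\'e lemma for the intrinsically defined closed forms $\theta_i$ (this also requires a global choice of the principal frame $\bm{e}_1,\bm{e}_2$, available because $\Sigma$ is simply connected and umbilic-free), and (ii) the covering-map lemma you invoke, which is standard; with those made explicit the argument is complete.
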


The assumption \eqref{eq-eps} is necessary. In fact, there exist non-umbilic space-like surfaces defined on $\D$ in $\L^3$ with non-zero CMC  
on which 
\[
\displaystyle \lim_{(u, v) \to \partial \D} (\kappa_{1} -\kappa_{2})^{2} =0,  
\]
where $\partial \D$ is the boundary of $\D$ (\cite[Remark 1]{Yam1988}). 

\begin{proof}  
By Corollary \ref{cor-uni}, we only need to consider $\Sigma =\D$. The following argument is based on \cite[Section 4]{Yam1988}. 
Suppose that there exists a complete space-like surface defined on $\D$ in $\L^3$ with non-zero CMC  
and principal curvatures $\kappa_{1}, \kappa_{2}$ satisfying \eqref{eq-eps}. Then we can take an isothermic coordinate system $(u, v)$ in Theorem \ref{thm-ICC} on $\D$. 
Since $\mathrm{I}_{X}$ is complete, we have 
\[
\displaystyle \lim_{(u, v) \to \partial \D} e^{\omega} = +\infty.  
\]
From \eqref{eq_Hopf-diff} and \eqref{eq-eps}, $\phi$ is a non-vanishing holomorphic function on $\D$ which satisfies 
\[
\displaystyle \lim_{(u, v) \to \partial \D} |\phi (z)| = +\infty. 
\]

Set $\psi =1/\phi$. Then $\psi$ is holomorphic on $\D$ and continuous on $\D \cup \partial \D$ with $\psi |_{\partial \D} =0$. By the Cauchy expression, we obtain 
\[
\psi (0) = \dfrac{1}{2\pi\mathrm{i}} \int_{\partial \D} \dfrac{\psi (z)}{z}\, dz =0. 
\]
This contradicts that $\phi$ is a non-vanishing holomorphic function on $\D$.  
\end{proof}

Finally, we give a corollary of Theorem \ref{thm-KY}. 
By using the argument for the existence of an isothermic coordinate system, Klotz and Osserman \cite{KO1966} proved that a complete CMC surface in $\E^3$  
whose Gaussian curvature does not change sign is either a sphere, a minimal surface, or a right circular cylinder. In $\L^3$, as a corollary of Theorem \ref{thm-KY},   
we can show the following uniqueness result.  

\begin{corollary}\label{cor-KO}
If $X\colon \Sigma \to \L^3$ is a complete space-like CMC-$H$ surface with non-negative Gaussian curvature,  
then it is a plane or hyperbolic cylinder. 
\end{corollary}

Before proving this, we call the Calabi-Bernstein theorem, first shown by Calabi \cite{Ca1970}. 
For other proofs of this fact, see \cite{AP2001, Ca1970, Ka2013, Ko1983, Ro1996, UY2006} for example. 
We also note that a space-like surface  in $\L^3$ whose mean curvature vanishes everywhere is called a maximal surface in $\L^3$. 

\begin{fact}[Calabi-Bernstein theorem]\label{fa-CB}
Any complete maximal surface in $\L^3$ must be a plane. 
\end{fact}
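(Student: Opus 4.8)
The plan is to run the classical Gauss-map argument, the twist being that for a \emph{maximal} surface the conformal type is necessarily parabolic. Let $X\colon\Sigma\to\L^3$ be a complete maximal surface. As recalled above (just before Fact \ref{fa-Yam}), $X$ is then entire, so $\Sigma$ is simply-connected and non-compact and hence $(\Sigma,\mathrm{I}_X)$ is conformally $\C$ or $\D$. Since $H\equiv 0$ we have $\kappa_1=-\kappa_2$, so $K=-\kappa_1\kappa_2=\kappa_1^2\ge 0$; equivalently, in an isothermal coordinate $z=u+\mathrm{i}v$ with $\mathrm{I}_X=\lambda^2(du^2+dv^2)$ the Codazzi equation \eqref{eq-Codazzi-iso} forces $N=-L$, the Hopf differential \eqref{eq-Hopf-comp} reduces to the holomorphic function $\phi=L-\mathrm{i}M$, and $K=|\phi|^2/\lambda^4$ in accordance with \eqref{eq_Hopf-diff}.

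First I would rule out the hyperbolic type. Since $K\ge 0$, the negative part of the Gaussian curvature vanishes identically, so Huber's theorem applies to the complete surface $(\Sigma,\mathrm{I}_X)$: it is conformally a compact Riemann surface with finitely many points removed. Because $\Sigma$ is simply-connected and non-compact, this compact surface must be $S^2$ with exactly one puncture, i.e. $(\Sigma,\mathrm{I}_X)$ is conformally $\C$. Thus from now on $z$ may be taken to range over all of $\C$.

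Next comes the Gauss map. The unit normal $\bm{n}$ takes values on the sheet $\{\,x:\langle x,x\rangle_L=-1,\ x_3>0\,\}$ of the hyperboloid, which with its induced metric is the hyperbolic plane $\H^2$; composing with stereographic projection turns $\bm{n}$ into a map $g\colon\C\to\D$. Using $\omega_{13}=\kappa_1\omega_1$ and $\omega_{23}=\kappa_2\omega_2$ (Subsection \ref{sec2-2}) together with $\kappa_1=-\kappa_2$, one finds $\langle d\bm{n},d\bm{n}\rangle_L=\kappa_1^2\omega_1^2+\kappa_2^2\omega_2^2=K\,\mathrm{I}_X$; that is, the pull-back under $\bm{n}$ of the metric of $\H^2$ is a nonnegative conformal multiple of $\mathrm{I}_X$, so $g$ is holomorphic or antiholomorphic. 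Replacing $g$ by $\bar g$ and reversing orientation if necessary, $g$ is a holomorphic map of $\C$ into the bounded domain $\D$, hence constant by Liouville's theorem; then $\bm{n}$ is constant, so $\langle X,\bm{n}\rangle_L$ is constant and $X(\Sigma)$ lies in an affine plane orthogonal to a timelike vector, which is spacelike. By completeness $X(\Sigma)$ is that entire plane, as claimed.

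The main obstacle is precisely the conformal-type step: one must know that completeness together with $K\ge 0$ excludes the disk. I chose to extract this from Huber's theorem; alternatively it follows from the Osserman-type analysis of the conformal structure of complete spacelike graphs, or one can bypass it entirely through the Calabi correspondence — writing the entire maximal graph as $x_3=f(x_1,x_2)$ with $|\nabla f|<1$, the field $\nabla f/\sqrt{1-|\nabla f|^2}$ is divergence-free on the simply-connected $\R^2$, and its stream function solves the Euclidean minimal surface equation on all of $\R^2$, so the classical Bernstein theorem makes it affine and forces $\nabla f$ to be constant. The remaining ingredients — holomorphicity of $\phi$, the identity $K=|\phi|^2/\lambda^4$, conformality of the Gauss map when $H\equiv 0$, and Liouville's theorem — are routine.
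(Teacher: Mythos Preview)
The paper does not actually prove Fact~\ref{fa-CB}; it is stated as a known result and the text only lists references (\cite{AP2001, Ca1970, Ka2013, Ko1983, Ro1996, UY2006}) where proofs can be found. So there is no ``paper's own proof'' to compare against.

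That said, your argument is a correct outline and is essentially the proof of Kobayashi \cite{Ko1983} (the Liouville argument for the hyperbolic Gauss map), with the parabolicity step handled via Huber's theorem. Two small remarks. First, the appeal to Huber is fine since $K\ge 0$ makes the negative part of the curvature integrable trivially; alternatively, the parabolicity of $(\Sigma,\mathrm{I}_X)$ follows already from the fact that an entire spacelike graph has first fundamental form uniformly equivalent to the Euclidean metric on $\R^2$, which is a more elementary route and avoids importing Huber. Second, the passage from ``$\bm{n}^{*}h_{\H^2}=K\,\mathrm{I}_X$'' to ``$g$ is holomorphic or antiholomorphic'' deserves one more sentence at the umbilic locus $\{K=0\}$: there $d\bm{n}=0$, so $g_z$ and $g_{\bar z}$ both vanish, and since $g$ is bounded and (anti)holomorphic on the open set $\{K>0\}$, the removable-singularity theorem (or, more directly, the Weierstrass-type representation for maximal surfaces) extends the (anti)holomorphy across. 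With these points noted, the Liouville conclusion and the final step that a constant normal forces $X(\Sigma)$ to be a spacelike plane are exactly right. Your alternative via the Calabi correspondence is also correct and is in fact Calabi's original route \cite{Ca1970}.
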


\begin{proof}
The proof of Corollary \ref{cor-KO} is straightforward. If $H=0$, then $X(\Sigma)$ must be a plane by Fact \ref{fa-CB}. If $H\not= 0$,  
since $K= -\kappa_{1}\kappa_{2} >0$, we obtain $(\kappa_{1}-\kappa_{2})^2 =4(H^2 +K) \geq 4H^2$. By virtue of Theorem \ref{thm-KY}, 
$X(\Sigma)$ must be  a hyperbolic cylinder. 
\end{proof}

Treibergs \cite{Tr1982} showed there exist many complete space-like surfaces in $\L^3$ with non-zero CMC other than hyperbolic cylinders. 
By Corollary \ref{cor-KO}, those surfaces must have at least one point $p\in \Sigma$ where $K(p)< 0$. 

On the other hand, it is known that the Gaussian curvature of a complete space-like surface in $\L^3$ with non-zero CMC  
must be non-positive (see \cite[Corollary on page 49]{Tr1982}, \cite[Corollary on page 419]{CY1976} for example). 
Thus a complete space-like surfaces in $\L^3$ with non-zero CMC does not have any point $p\in \Sigma$ where $K(p)> 0$, 
and in fact, the conclusion of Corollary \ref{cor-KO} includes only surfaces $H\equiv 0$.



\begin{thebibliography}{99}


\bibitem{AMR2016}
\textsc{L. J. Al\'ias, P. Mastrolia, M. Rigoli}, 
Maximum principles and geometric applications.
Springer Monogr. Math., Springer, Cham, 2016. 

\bibitem{AP2001}
\textsc{L. J. Al\'ias and B. Palmer}, 
On the Gaussian curvature of maximal surfaces and the Calabi-Bernstein theorem, Bull. London Math. Soc. {\bf 33} (2001), 454--458. 

\bibitem{BP1999}
\textsc{A.~I.~Bobenko, U.~Pinkall}, 
Discretization of surfaces and integrable systems, in Discrete Integrable Geometry and Physics (Vienna, 1996), Editors A.I. Bobenko, R. Seiler, Oxford Lecture Ser. Math. Appl., Vol. 16, Oxford University Press, New York, 1999, 3–-58.

\bibitem{Ca1970}
\textsc{E. Calabi},  
Examples of Bernstein problems for some nonlinear equations, Proc. Sympos. Pure Math. {\bf 15} (1970), 223--230. 

\bibitem{CY1976}
\textsc{S. Y. Cheng and S. T. Yau}, 
Maximal spacelike hypersurfaces in the Lorentz-Minkowski spaces, Ann. of Math. {\bf 104} (1976), 407--419. 

\bibitem{DIK2016}
\textsc{J. F. Dorfmeister, J. Inoguchi, S. Kobayashi} 
On the Bernstein problem in the three-dimensional Heisenberg group, 
Canad. Math. Bull. {\bf 59} (2016), 50--61.

\bibitem{HKKT2021}
\textsc{A. Honda, Y. Kawakami, M. Koiso, S. Tori}, 
Heinz-type mean curvature estimates in Lorentz-Minkowski space, 
Rev. Mat. Complut. {\bf 34} (2021), 641–-651.

\bibitem{Ho1983}
\textsc{H. Hopf}, 
Differential geometry in the large, Lecture Notes in Math. 1000, Springer-Verlag, Berlin, 1983.

\bibitem{HHV2011}
\textsc{Th. Hasanis, A. Savas-Halilaj, Th. Vlachos}, 
On the Jacobian of minimal graphs in $\mathbb{R}^4$, 
Bull. Lond. Math. Soc. {\bf 43} (2011), 321--327. 

\bibitem{HJ2003}
\textsc{U.~Hertrich-Jeromin}, 
Introduction to M\"obius differential geometry. London Mathematical Society Lecture Note Series, vol. 300. Cambridge University Press, Cambridge (2003). 

\bibitem{Ino1997}
\textsc{J. Inoguchi}, 
Surfaces in Minkowski $3$-space and harmonic maps, 
Geometry of harmonic maps and submanifolds (Kyoto, 1997), S\={u}rikaisekikenky\={u}sho K\={o}kyb\={u}roku, {\bf 995} (1997), 58--69. 

\bibitem{Ka2013}
\textsc{Y. Kawakami}, 
On the maximal number of exceptional values of Gauss maps for various classes of surfaces, Math. Z. {\bf 274} (2013), 1249--1260.

\bibitem{Ke2003}
\textsc{K. Kenmotsu}, 
Surfaces with constant mean curvature.
Translated from the 2000 Japanese original by Katsuhiro Moriya and revised by the author
Transl. Math. Monogr., {\bf 221}, American Mathematical Society, Providence, RI, 2003. 

\bibitem{Ko1983}
\textsc{O. Kobayashi}, 
Maximal surfaces in the $3$-dimensional Minkowski space ${\mathbf{L}}^{3}$, Tokyo J. Math. {\bf 6} (1983), 297--309. 

\bibitem{KO1966}
\textsc{T. Klotz, R. Osserman}, 
Complete surfaces in $E^3$ with constant mean curvature, 
Comment. Math. Helv. {\bf 41} (1966/67), 313-318.

\bibitem{Mi1983}
\textsc{T. K. Milnor}, 
Harmonic maps and classical surface theory in Minkowski $3$-space, 
Trans. Amer. Math. Soc. {\bf 280} (1983), 161–-185.

\bibitem{Os1986}
\textsc{R. Osserman}, 
A survey of minimal surfaces, 2nd ed, Dover Publications, Inc., New York, 1986. 

\bibitem{Ro1996}
\textsc{A. Romero}, Simple proof of Calabi-Bernstein's theorem on maximal surfaces, Proc. Amer. Math. Soc. {\bf 124} (1996), 1315--1317. 

\bibitem{Sa2008}
\textsc{I. M. C. Salavessa},   
Spacelike graphs with parallel mean curvature, 
Bull. Belg. Math. Soc. Simon Stevin {\bf 15} (2008), 65–-76. 

\bibitem{Tr1982}
\textsc{T. Treibergs}.    
Entire spacelike hypersurfaces of constant mean curvature in Minkowski space,  
Invent. Math. {\bf 66}, 39--56 (1982) 

\bibitem{UY2006}
\textsc{M. Umehara and K. Yamada}, 
Maximal surfaces with singularities in Minkowski space, Hokkaido Math. J. {\bf 35} (2006), 13--40. 

\bibitem{Yam1988}
\textsc{K. Yamada}, 
Complete space-like surfaces with constant mean curvature in the Minkowski $3$-space, 
Tokyo J. Math. {\bf 11} (1988), 329–-338.

\end{thebibliography}
\end{document}